\newtheorem{theorem}{Theorem}
\newtheorem{proposition}{Proposition}
\newtheorem{lemma}{Lemma}
\newtheorem{corollary}{Corollary}
\newtheorem{remark}{Remark}
\begin{document}
	
\title{A strongly convergent Krasnosel'ski\v{\i}-Mann-type algorithm for finding a common fixed point of a countably infinite family of nonexpansive operators in Hilbert spaces} 
\author{Radu Ioan Bo\c{t}\thanks{University of Vienna, Faculty of Mathematics, Oskar-Morgenstern-Platz 1, A-1090 Vienna, Austria, email: radu.bot@univie.ac.at. Research partially supported by the Austrian Science Fund (FWF), project I 2419-N32.} \and Dennis Meier\thanks{University of Vienna, Faculty of Mathematics, Oskar-Morgenstern-Platz 1, A-1090 Vienna, Austria, email: meierd61@univie.ac.at. Research supported by the Austrian Science Fund (FWF), project I 2419-N32.}}
\date{\today}
\maketitle

\noindent \textbf{Abstract.} In this article, we propose a Krasnosel'ski\v{\i}-Mann-type algorithm for finding a common fixed point of a countably infinite family of nonexpansive operators $(T_n)_{n \geq 0}$ in Hilbert spaces. 
We formulate an asymptotic property which the family $(T_n)_{n \geq 0}$ has to fulfill such that the sequence generated by the algorithm converges strongly to the element in $\bigcap_{n \geq 0} \operatorname{Fix} T_n$ with minimum norm. Based on this, we derive a 
forward-backward algorithm that allows variable step sizes and generates a sequence of iterates that converge strongly to the zero with minimum norm of the sum of a maximally monotone operator and a cocoercive one. We demonstrate the superiority of the forward-backward algorithm with 
variable step sizes over the one with constant step size by means of numerical experiments on variational image reconstruction and split feasibility problems in infinite dimensional Hilbert spaces.

\noindent \textbf{Key Words.} fixed points of families of nonexpansive mappings, Tikhonov regularization, splitting methods, forward-backward algorithm 

\noindent \textbf{AMS subject classification.} 47J25, 47H09, 47H05, 90C25	
	
\section{Introduction}	

Let $\mathcal{H}$ be a real Hilbert space with inner product $\langle \cdot , \cdot \rangle$ and induced norm $\|\cdot\| := \sqrt{\langle \cdot , \cdot \rangle}$. For a given nonexpansive (i.e. $1$-Lipschitz continuous) mapping $T : \mathcal{H} \to \mathcal{H}$, one of the most prominent iterative methods for finding a fixed point of $T$ is the so-called \textit{Krasnosel'ski\v{\i}-Mann algorithm}, which reads as (see \cite{BC})
\begin{align}\label{1}
x_{n+1} = x_n + \lambda_n (T x_n - x_n) ~~ \forall n \geq 0,
\end{align}
where $x_0 \in \mathcal{H}$ is an arbitrary starting point and $(\lambda_n)_{n \geq 0}$ is a sequence of nonnegative real numbers. Under the assumption that the set of fixed points of $T$ is nonempty, one can show under mild conditions imposed on $(\lambda_n)_{n \geq 0}$ that the sequence $(x_n)_{n \geq 0}$ converges weakly to a fixed point of $T$. 

Since the solving of many monotone inclusion and convex optimization problems can be reduced to the solving of a fixed point problem, there is a huge interest in designing corresponding efficient and stable algorithms. For instance, the \textit{forward-backward algorithm}, for determining a zero of the sum of a set-valued maximally monotone operator and a single-valued and cocoercive one, and the \textit{Douglas-Rachford algorithm}, for determining a zero of the sum of two set-valued maximally monotone operators, can be embedded in the framework of the Krasnosel'ski\v{\i}-Mann algorithm. A shortcoming which all of the above mentioned algorithms share is that the convergence of the generated iterates take place only with respect to the weak topology. In order to achieve strong convergence, one has to assume that the involved operators satisfy a stronger notion of monotonicity, like strong monotonicity (\cite{BC}). 

Since, however, many interesting problems to be solved are formulated in  infinite dimensional function spaces, where weak and strong convergence do not coincide, and do not involve strong monotone operators, there is a strong interest in developing algorithms, which generate iterates that strongly converge under minimal assumptions. A variant of the Krasnosel'ski\v{\i}-Mann algorithm which overcomes the drawback of weak convergence has been proposed in \cite{BoCsMe} and reads as follows:
\begin{align}\label{2}
x_{n+1} = \beta_n x_n + \lambda_n (T (\beta_n x_n) - \beta_n x_n) ~~ \forall n \geq 0,
\end{align} 
where $x_0 \in \mathcal{H}$ is an arbitrary starting point and $(\lambda_n)_{n \geq 0}$ and $(\beta_n)_{n \geq 0}$ are suitably chosen sequences of positive numbers. The formulation of the iterative scheme has its roots in a standard Tikhonov regularization approach. The sequence $(\beta_n)_{n \geq 0}$ is called \textit{Tikhonov regularization sequence} and has the role to enforce the strong convergence of $(x_n)_{n \geq 0}$ to the fixed point of $T$ with minimum norm. The iterative scheme \eqref{2} was the starting point for deriving in \cite{BoCsMe} strongly convergent forward-backward, Douglas-Rachford as well as primal-dual algorithms for monotone inclusion problems. Tikhonov regularization techniques has been used also in \cite{At, LM, X} in order to enforce strong convergence of numerical algorithms.

In this article we will address the problem of finding a common fixed point of a family of nonexpansive operators $T_n : \mathcal{H} \to \mathcal{H}$, for $n \geq 0$. We will formulate a  Krasnosel'ski\v{\i}-Mann-type algorithm endowed with Tikhonov regularization terms, which evaluates in the iteration $n \geq 0$ the operator $T_n$. Under the hypothesis that the sequence $(T_n)_{n \geq 0}$ fulfils an appropriate asymptotic condition, and provided that the intersection of the sets of fixed points of $(T_n)_{n \geq 0}$ is nonempty, we show that the sequence of generated iterates converges strongly to the common fixed point with minimum norm of the mappings $(T_n)_{n \geq 0}$. Based on this, we derive a strongly convergent forward-backward method with variable step sizes for finding the zeros of the sum of a maximally monotone operator and a cocoercive one. This method is much more flexible than the variant with constant step size, a fact which we also emphasize by means of numerical experiments on variational image reconstruction and split feasibility problems in infinite dimensional Hilbert spaces.

\section{A strongly convergent Krasnosel'ski\v{\i}-Mann-type algorithm}\label{sec2}

The symbols $\rightharpoonup$ and $\rightarrow$ denote weak and strong convergence, respectively. We recall that a mapping $T : \mathcal{H} \to \mathcal{H}$ is called nonexpansive, if $\|Tx - Ty\| \leq \|x-y\|$ for all $x,y \in \mathcal{H}$. The set of fixed points of $T$ is denoted by
$\operatorname{Fix} T:=\{x \in \mathcal{H}: T(x)=x\}$. For a nonempty convex and closed set $C \subseteq \mathcal{H}$, the projection operator onto $C$, $\operatorname{P}_C : \mathcal{H} \to \mathcal{H}$, is defined as $\operatorname{P}_C(x) := \operatorname{argmin}\limits_{c \in C} \| x - c \|$.

The following result, which is a direct consequence of \cite[Lemma~2.5]{X}, will play a key role in the proof of the main result of this paper, which we formulate and prove subsequently.
	
\begin{lemma}\label{lemma 4}
	Let $(a_n)_{n \geq 0}$ be a sequence of non-negative real numbers satisfying the inequality 
	\[a_{n+1} \leq (1 - \theta_n)a_n + \theta_n b_n + \epsilon_n \phantom{xx} \forall n \geq 0, \]
	where \\
	(i) $0 \leq \theta_n \leq 1$ for all $n \geq 0$ and $\sum_{n \geq 0} \theta_n = + \infty$;\\
	(ii) $\limsup_{n \to + \infty} b_n \leq 0$;\\
	(iii) $\epsilon_n \geq 0$ for all $n \geq 0$ and $\sum_{n \geq 0} \epsilon_n < + \infty$. \\[5pt]
	Then the sequence $(a_n)_{n \geq 0}$ converges to $0$. 
\end{lemma}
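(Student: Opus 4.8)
The plan is to prove the stronger-looking but equivalent claim $\limsup_{n\to\infty} a_n \le 0$; since every $a_n$ is non-negative, this immediately forces $a_n \to 0$. Fix an arbitrary $\varepsilon > 0$. Hypothesis (ii) is what lets me absorb the $b_n$-term: because $\limsup_n b_n \le 0$ there is an index $N$ with $b_n \le \varepsilon$ for all $n \ge N$, and by enlarging $N$ if necessary I may simultaneously arrange, using (iii), that the tail satisfies $\sum_{k \ge N}\epsilon_k \le \varepsilon$. For $n \ge N$ the recursion then reads $a_{n+1} \le (1-\theta_n)a_n + \theta_n\varepsilon + \epsilon_n$, which I rewrite in the translated variable $a_n - \varepsilon$ as
\[ a_{n+1} - \varepsilon \le (1-\theta_n)(a_n - \varepsilon) + \epsilon_n. \]
This is the key algebraic step, and it works precisely because $\varepsilon = (1-\theta_n)\varepsilon + \theta_n\varepsilon$, so the constant $\varepsilon$ behaves like an approximate fixed point of the iteration up to the error $\epsilon_n$.

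Next I would iterate this inequality from $N$ to $n$. Unrolling it yields
\[ a_{n+1} - \varepsilon \le \Bigl(\prod_{k=N}^{n}(1-\theta_k)\Bigr)(a_N - \varepsilon) + \sum_{j=N}^{n}\Bigl(\prod_{k=j+1}^{n}(1-\theta_k)\Bigr)\epsilon_j. \]
By (i) every factor $1-\theta_k$ lies in $[0,1]$, so each partial product is at most $1$ and the error sum is bounded by $\sum_{j\ge N}\epsilon_j \le \varepsilon$. For the first term I would use the elementary bound $1-\theta_k \le e^{-\theta_k}$, so that $\prod_{k=N}^{n}(1-\theta_k) \le \exp\bigl(-\sum_{k=N}^{n}\theta_k\bigr)$, which tends to $0$ as $n\to\infty$ because $\sum_{k\ge 0}\theta_k = +\infty$ by (i); since $a_N - \varepsilon$ is a fixed quantity once $N$ is chosen, the entire first term vanishes in the limit. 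Taking $\limsup_{n\to\infty}$ therefore gives $\limsup_n a_n \le \varepsilon + \varepsilon = 2\varepsilon$, and letting $\varepsilon \downarrow 0$ yields $\limsup_n a_n \le 0$, as desired.

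The step I expect to demand the most care is the bookkeeping around the choice of $N$ and the order of limits: $N$ must be taken large enough to make $b_n$ small (via (ii)) \emph{and} the tail $\sum_{k\ge N}\epsilon_k$ small (via (iii)); then $N$ and $\varepsilon$ are held fixed while the $\limsup$ in $n$ is taken, so that the vanishing product annihilates the $a_N$-term no matter how large $a_N$ happens to be; and only at the very end is $\varepsilon$ sent to $0$. A cleaner variant that avoids the explicit products is to pass to the auxiliary sequence $A_n := a_n + \sum_{k\ge n}\epsilon_k$; a short computation shows $A_{n+1} \le (1-\theta_n)A_n + \theta_n b_n'$ with $b_n' := b_n + \sum_{k\ge n}\epsilon_k$, and since the tail tends to $0$ one still has $\limsup_n b_n' \le 0$, so the problem reduces to the error-free case $\epsilon_n \equiv 0$ and then $a_n \le A_n \to 0$.
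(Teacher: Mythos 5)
Your proof is correct. Note, however, that the paper itself does not prove this lemma at all: it is stated as ``a direct consequence of [Lemma 2.5]{X}'' (Xu, \emph{J.\ London Math.\ Soc.}\ 2002), so you have supplied in full the argument that the paper outsources to a citation. Your route --- fix $\varepsilon>0$, pick $N$ so that both $b_n\le\varepsilon$ for $n\ge N$ and $\sum_{k\ge N}\epsilon_k\le\varepsilon$, translate by $\varepsilon$ so the recursion becomes $a_{n+1}-\varepsilon\le(1-\theta_n)(a_n-\varepsilon)+\epsilon_n$, unroll into products, and kill the initial term via $1-\theta_k\le e^{-\theta_k}$ together with $\sum_k\theta_k=+\infty$ --- is exactly the standard mechanism behind Xu's lemma, so there is no methodological divergence to report, only the difference between citing and proving. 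All the delicate points are handled properly: the order of quantifiers ($N$ and $\varepsilon$ fixed while $n\to\infty$, then $\varepsilon\downarrow 0$), the fact that the tail $\sum_{k\ge N}\theta_k$ still diverges, and the sign of $a_N-\varepsilon$ (if it is negative the leading term is $\le 0$, otherwise it is annihilated by the vanishing product, so in either case its $\limsup$ is $\le 0$). Your closing remark is also sound: with $A_n:=a_n+\sum_{k\ge n}\epsilon_k$ one gets $A_{n+1}\le(1-\theta_n)A_n+\theta_n b_n'$ with $b_n':=b_n+\sum_{k\ge n}\epsilon_k$ (in fact the two sides of the transformation match exactly), which cleanly reduces the lemma to its $\epsilon_n\equiv 0$ case; either version would serve as a complete replacement for the external reference.
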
	
	
\begin{theorem}\label{thm1}
	Let $(T_n)_{n \geq 0}$ be a sequence of nonexpansive operators $T_n : \mathcal{H} \to \mathcal{H}$, for $n \geq 0$, with $S := \bigcap_{n \geq 0} \operatorname{Fix} T_n \neq \emptyset$.
	Consider the iterative scheme 
	\begin{align}\label{eq1}
	x_{n+1} = \beta_n x_n + \lambda_n (T_n (\beta_n x_n) - \beta_n x_n) \phantom{xx} \forall n \geq 0,
	\end{align}
	with starting point $x_0 \in \mathcal{H}$ and $(\lambda_n)_{n \geq 0}$, $(\beta_n)_{n \geq 0}$ real sequences satisfying the conditions: \\[5pt]
	(i) $0 < \beta_n \leq 1$ for any $n \geq 0$, $\lim_{n \to +\infty} \beta_n = 1$, $\sum_{n \geq 0} (1 - \beta_n) = + \infty$ and $\sum_{n \geq 1} |\beta_n - \beta_{n-1}| < + \infty$; \\
	(ii) $0 < \lambda_n \leq 1$ for any $n \geq 0$, $\liminf_{n \to +\infty} \lambda_n > 0$ and $\sum_{n \geq 1} |\lambda_n - \lambda_{n-1}| < + \infty$; \\
	(iii)
	\begin{align*}
	\sum_{n \geq 1} \|T_n(\beta_{n-1} x_{n-1}) - T_{n-1}(\beta_{n-1} x_{n-1})\| < + \infty.
	\end{align*} 
	Then 
	\begin{align*}
	\|x_n - T_n x_n \| \to 0 \text{ as } n \to + \infty.
	\end{align*}
	In addition, if we suppose that the sequence $(T_n)_{n \geq 0}$ satisfies the asymptotic condition
	\begin{align}\label{asymptotic}
	&\text{for any subsequences } (T_{n_k})_{k \geq 0} \text{ of } (T_n)_{n \geq 0} \text{ and } (x_{n_k})_{k \geq 0} \text{ of } (x_n)_{n \geq 0} \nonumber \\ &\text{with } x_{n_k} \rightharpoonup x \in \mathcal{H} \text{ and } x_{n_k} - T_{n_k} x_{n_k} \to 0 \text{ as } k \to + \infty \text{ it holds } x \in \bigcap_{n \geq 0} \operatorname{Fix} T_n, 
	\end{align} 
	then $(x_n)_{n \geq 0}$ converges strongly to $\operatorname{P}_S (0)$. 
\end{theorem}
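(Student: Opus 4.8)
The plan is to fix $p := \operatorname{P}_S(0)$, abbreviate $y_n := \beta_n x_n$ so that \eqref{eq1} reads $x_{n+1} = (1-\lambda_n)y_n + \lambda_n T_n y_n$, and to carry out the argument in four stages, each reducing to Lemma~\ref{lemma 4}. First I would show $(x_n)_{n \geq 0}$ is bounded. Taking any $p \in S$ and using $T_n p = p$, nonexpansiveness and $0 \le \lambda_n \le 1$ gives $\|x_{n+1}-p\| \le \|y_n - p\|$; since $y_n - p = \beta_n(x_n - p) - (1-\beta_n)p$, this produces the convex-combination estimate $\|x_{n+1}-p\| \le \beta_n\|x_n-p\| + (1-\beta_n)\|p\|$, whence $\|x_n-p\| \le \max\{\|x_0-p\|,\|p\|\}$ for all $n$ by induction. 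Consequently $(x_n)$, $(y_n)$, $(T_n y_n)$ and $(x_n - T_n x_n)$ are all bounded.

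Second, the key preparatory estimate is $\|x_{n+1}-x_n\| \to 0$. Expanding $x_{n+1}-x_n$ and inserting the telescoping terms $T_n y_{n-1}$ and $T_{n-1}y_{n-1}$, nonexpansiveness of $T_n$ collapses the leading part to $\|y_n-y_{n-1}\|$, and (using $y_n - y_{n-1} = \beta_n(x_n-x_{n-1}) + (\beta_n-\beta_{n-1})x_{n-1}$) one is left with
\begin{align*}
\|x_{n+1}-x_n\| \le \beta_n\|x_n-x_{n-1}\| + |\beta_n-\beta_{n-1}|\,\|x_{n-1}\| + |\lambda_n-\lambda_{n-1}|\,\|y_{n-1}-T_{n-1}y_{n-1}\| + \lambda_n\|T_n y_{n-1} - T_{n-1}y_{n-1}\|.
\end{align*}
Writing $a_n := \|x_n-x_{n-1}\|$ and collecting the last three terms into $\epsilon_n$, the boundedness from the first stage together with conditions (i), (ii), (iii) gives $\sum_n \epsilon_n < +\infty$ (the three finite sums come from the summability of $|\beta_n-\beta_{n-1}|$, of $|\lambda_n-\lambda_{n-1}|$, and from hypothesis (iii), since $y_{n-1}=\beta_{n-1}x_{n-1}$). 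As $\sum_n(1-\beta_n)=+\infty$, Lemma~\ref{lemma 4} with $b_n = 0$ yields $a_n \to 0$.

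Third, from $x_{n+1}-y_n = \lambda_n(T_n y_n - y_n)$ and $x_{n+1}-y_n = (x_{n+1}-x_n) + (1-\beta_n)x_n$, the previous stage, $\beta_n \to 1$ and boundedness give $\|T_n y_n - y_n\| \to 0$, where $\liminf \lambda_n > 0$ is used to divide by $\lambda_n$. Since $\|x_n - y_n\| = (1-\beta_n)\|x_n\| \to 0$, the triangle inequality and nonexpansiveness of $T_n$ yield $\|x_n - T_n x_n\| \le 2\|x_n-y_n\| + \|y_n - T_n y_n\| \to 0$, which is the first assertion.

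Finally, squaring $\|x_{n+1}-p\| \le \|y_n-p\|$ and expanding $y_n - p = (1-\theta_n)(x_n-p) - \theta_n p$ with $\theta_n := 1-\beta_n$ gives, after regrouping,
\begin{align*}
\|x_{n+1}-p\|^2 \le (1-2\theta_n)\|x_n-p\|^2 + 2\theta_n b_n, \qquad b_n := \tfrac{\theta_n}{2}\big(\|x_n-p\|^2+\|p\|^2\big) - (1-\theta_n)\langle x_n-p,p\rangle,
\end{align*}
where $2\theta_n \in [0,1]$ for $n$ large and $\sum_n 2\theta_n = +\infty$. The main obstacle is verifying $\limsup_n b_n \le 0$: since $\theta_n \to 0$ and $(x_n)$ is bounded, this reduces to $\limsup_n \langle p, p - x_n\rangle \le 0$. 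To obtain it I would pick a subsequence $(x_{n_k})$ realizing this $\limsup$ and pass to a further weakly convergent one $x_{n_k} \rightharpoonup \bar x$; the first assertion gives $x_{n_k} - T_{n_k}x_{n_k} \to 0$, so the asymptotic condition \eqref{asymptotic} forces $\bar x \in S$, and the variational characterization $\langle -p, s-p\rangle \le 0$ for all $s \in S$ of $p = \operatorname{P}_S(0)$ yields $\lim_k \langle p, p-x_{n_k}\rangle = \langle -p, \bar x - p\rangle \le 0$. With $\limsup_n b_n \le 0$ established, Lemma~\ref{lemma 4} applied to $a_n = \|x_n-p\|^2$ (with $\epsilon_n = 0$) gives $\|x_n-p\|^2 \to 0$, i.e. $x_n \to \operatorname{P}_S(0)$. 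The genuinely conceptual step is this last $\limsup$ argument, where \eqref{asymptotic} is precisely what converts the weak limit of a residual-vanishing subsequence into membership in $S$; the only other laborious point is the four-term bound in the second stage, which simultaneously accounts for the three perturbation sources $\beta_n$, $\lambda_n$ and $T_n$.
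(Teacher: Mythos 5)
Your proof is correct and follows essentially the same route as the paper: boundedness by induction, $\|x_{n+1}-x_n\|\to 0$ via Lemma \ref{lemma 4}, vanishing of the residual, and then the $\limsup$ argument combining the asymptotic condition with the variational characterization of $\operatorname{P}_S(0)$, followed by a second application of Lemma \ref{lemma 4}. The only deviations are cosmetic: you argue the $\limsup$ directly on a subsequence realizing it rather than by contradiction, and your regrouping with coefficient $2\theta_n$ requires restricting to a tail where $\beta_n \geq \tfrac{1}{2}$, which is harmless.
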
	
	
\begin{proof}
First, we show that the sequence $(x_n)_{n \geq 0}$ is bounded. Let be $x \in S$.
Due to the nonexpansiveness of $T_n$, we have for any $n \geq 0$
\begin{align*}
\|x_{n + 1} - x \| &= \|(1 -  \lambda_n) (\beta_n x_n - x) +  \lambda_n (T_n(\beta_n x_n) - T_n x) \| \\ 
&\leq (1 -  \lambda_n) \| (\beta_n x_n - x) \| +  \lambda_n \| (T_n(\beta_n x_n) - T_n x) \| \\
&\leq \| \beta_n x_n - x \| = \| \beta_n (x_n - x) + (\beta_n - 1)x \| \\
&\leq \beta_n \| x_n - x \| + (1 - \beta_n) \| x \|.
\end{align*}
From here it follows that
\begin{align*}
\| x_n - x \| \leq \operatorname{max}\{\| x_0 - x \|, \| x \|\} \phantom{xx} \forall n \geq 0,
\end{align*}
what shows that $(x_n)_{n \geq 0}$ is bounded. Since $T_n$ is nonexpansive, we also have for any $n \geq 0$
\begin{align*}
\|T_n(\beta_n x_n) - x \| = \|T_n(\beta_n x_n) - T_n x \| \leq \|\beta_n x_n - x \|,
\end{align*}
This shows that the sequence $(T_n(\beta_n x_n))_{n \geq 0}$ is also bounded. 

Next, we prove that
\begin{align}\label{eq3}
\| x_{n+1} - x_n \| \to 0 \text{ as } n \to + \infty.
\end{align}
Indeed, by taking into account that the operators $(T_n)_{n \geq 0}$ are nonexpansive, we can derive for any $n \geq 1$ the following estimate
\begin{align*}
& \ \| x_{n+1} - x_n \|  \\ 
= & \ \| (1 -  \lambda_n) \beta_n x_n - (1 -  \lambda_{n-1}) \beta_{n-1} x_{n-1} +  \lambda_n T_n(\beta_n x_n) -  \lambda_{n-1} T_{n-1}(\beta_{n-1} x_{n-1}) \| \\[5pt]
\leq & \ \| (1 -  \lambda_n) (\beta_n x_n - \beta_{n-1} x_{n-1}) + ( \lambda_{n-1} -  \lambda_n) \beta_{n-1} x_{n-1} \| \\
& \  + \|  \lambda_n (T_n(\beta_n x_n) - T_{n-1}(\beta_{n-1} x_{n-1}) ) +  \lambda_n T_{n-1}(\beta_{n-1} x_{n-1}) -  \lambda_{n-1} T_{n-1}(\beta_{n-1} x_{n-1}) \| \\[5pt]
\leq & \ \| \beta_n x_n - \beta_{n-1} x_{n-1} \| + | \lambda_{n-1} -  \lambda_n| \| \beta_{n-1} x_{n-1} \| \\
& \ + \|  \lambda_n (T_n(\beta_{n-1} x_{n-1}) - T_{n-1}(\beta_{n-1} x_{n-1})) + ( \lambda_{n} -  \lambda_{n-1}) T_{n-1}(\beta_{n-1} x_{n-1}) \| \\[5pt]
\leq & \ \| \beta_n x_n - \beta_{n-1} x_{n-1} \| + | \lambda_{n-1} -  \lambda_n| \big(\| \beta_{n-1} x_{n-1} \| + \| T_{n-1}(\beta_{n-1}x_{n-1})\| \big) \\
& \ + \lambda_n \| T_n(\beta_{n-1} x_{n-1}) - T_{n-1}(\beta_{n-1} x_{n-1})\|.
\end{align*}
Thanks to the boundedness of the sequences $(\beta_n x_n)_{n \geq 0}$ and $(T_n(\beta_n x_n))_{n \geq 0}$, there exists a constant $C_2 > 0$ such that
\begin{align*}
\| x_{n+1} - x_n \| \leq & \ \| \beta_n x_n - \beta_{n-1} x_{n-1} \| + | \lambda_{n-1} -  \lambda_n| C_2\\
& \ + \lambda_n \| T_n(\beta_{n-1} x_{n-1}) - T_{n-1}(\beta_{n-1} x_{n-1})\| \ \forall n \geq 1.
\end{align*}
Since $(x_n)_{n \geq 0}$ is bounded, there exists a constant $C_1 >0$ such that
\begin{align*}
\| x_{n+1} - x_n \| \leq  & \ \| \beta_n (x_n - x_{n-1}) + (\beta_n - \beta_{n-1}) x_{n-1} \| \\ 
& + | \lambda_{n-1} -  \lambda_n| C_2 + \lambda_n \| T_n(\beta_{n-1} x_{n-1}) - T_{n-1}(\beta_{n-1} x_{n-1})\| \\
\leq & \ \beta_n \|x_n - x_{n-1}\| + |\beta_n - \beta_{n-1}| C_1 \\ 
&  + | \lambda_{n-1} -  \lambda_n| C_2 + \lambda_n \| T_n(\beta_{n-1} x_{n-1}) - T_{n-1}(\beta_{n-1} x_{n-1})\|.
\end{align*}
Now statement \eqref{eq3} is a consequence of Lemma \ref{lemma 4}, by taking for $n \geq 1$ the choices $a_n := \| x_n - x_{n-1} \|$, $b_n := 0$, $\theta_n := 1 - \beta_n$ and 
$$\epsilon_n := |\beta_n - \beta_{n-1}| C_1 + | \lambda_{n-1} -  \lambda_n| C_2 + \lambda_n \| T_n(\beta_{n-1} x_{n-1}) - T_{n-1}(\beta_{n-1} x_{n-1})\|.$$

Next we prove that 
\begin{align}\label{eq4}
\|x_n - T_n x_n \| \to 0 \text{ as } n \to + \infty.
\end{align}
For any $n \geq 0$ we have the following estimate
\begin{align*}
\| x_n - T_n x_n \| &\leq \| x_{n+1} - x_n \| + \| x_{n+1} - T_n x_n \| \\
&= \| x_{n+1} - x_n \| + \| (1 -  \lambda_n) (\beta_n x_n - T_n x_n ) +  \lambda_n (T_n(\beta_n x_n) - T_n x_n )\| \\
&\leq \| x_{n+1} - x_n \| + (1 -  \lambda_n) \| \beta_n x_n - T_n x_n \| +  \lambda_n \| \beta_n x_n -  x_n \| \\
&\leq \| x_{n+1} - x_n \| + (1 -  \lambda_n) \| \beta_n x_n - \beta_n T_n x_n  \| \\
& \phantom{xx} + (1 -  \lambda_n) \| \beta_n T_n x_n - T_n x_n \| +  \lambda_n (1 - \beta_n) \| x_n \| \\
&= \| x_{n+1} - x_n \| + (1 -  \lambda_n) \| x_n - T_n x_n  \| \\
& \phantom{xx} + (1 -  \lambda_n)(1 - \beta_n) \| T_n x_n \| +  \lambda_n (1 - \beta_n) \| x_n \|.
\end{align*}
From here it follows that
\begin{align*}
\lambda_n \| x_n - T_n x_n \| \leq \| x_{n+1} - x_n \| + (1 -  \lambda_n)(1 - \beta_n) \| T_n x_n \| +  \lambda_n (1 - \beta_n) \| x_n \| \ \forall n \geq 0.
\end{align*}
Since 
\begin{align*}
\| T_n x_n - x \| = \| T_n x_n - T_n x \| \leq \| x_n - x \| \leq \operatorname{max}\{ \| x_0 - x \|, \| x \| \},
\end{align*}
it follows that the sequence $(T_n x_n)_{n \geq 0}$ is also bounded. 
Taking into account that $(x_n)_{n \geq 0}$ is bounded, \eqref{eq3} and the assumptions (i) and (ii), it follows from the last inequality that \eqref{eq4} holds.

In order to prove the last statement of the theorem, we suppose that the sequence $(T_n)_{n \geq 0}$ fulfills the asymptotic condition \eqref{asymptotic}.
We denote by $\bar{x} := \operatorname{P}_S (0)$ the element in $S$ with minimum norm. Using again that $T_n$ is nonexpansive, we have for any $n \geq 0$
\begin{align*}
\|x_{n + 1} - \bar{x} \| &= \|(1 -  \lambda_n) (\beta_n x_n - \bar{x}) +  \lambda_n (T_n(\beta_n x_n) - T_n \bar{x}) \| \\ 
& \leq (1 -  \lambda_n) \| \beta_n x_n - \bar{x} \| +  \lambda_n \| T_n(\beta_n x_n) - T_n \bar{x} \| \\
& \leq \| \beta_n x_n - \bar{x} \|.
\end{align*}
Hence, 
\begin{align}\label{eq5}
\|x_{n + 1} - \bar{x} \|^2 &\leq \| \beta_n x_n - \bar{x} \|^2 = \| \beta_n (x_n - \bar{x}) + (\beta_n - 1) \bar{x} \|^2 \nonumber \\
&= \beta_n^2 \| x_n - \bar{x} \|^2 + 2 \beta_n (1 - \beta_n) \langle - \bar{x}, x_n - \bar{x} \rangle + (1 - \beta_n)^2 \| \bar{x} \|^2 \nonumber \\
&\leq \beta_n \| x_n - \bar{x} \|^2 + (1 - \beta_n) (2 \beta_n \langle - \bar{x}, x_n - \bar{x} \rangle + (1 - \beta_n) \| \bar{x} \|^2) \ \forall n \geq 0. 
\end{align}
Next we show that 
\begin{align}\label{eq6}
\limsup\limits_{n \to + \infty} \langle -\bar{x}, x_n - \bar{x} \rangle \leq 0.  
\end{align}
Assuming the contrary, there would exist a positive real number $l$ and a subsequence $(x_{n_k})_{k \geq 0}$ such that \[\langle -\bar{x}, x_{n_k} - \bar{x} \rangle \geq l > 0 \quad  \forall k \geq 0. \]
Due to the boundedness of the sequence $(x_n)_{n \geq 0}$, we can assume without losing the generality that $(x_{n_k})_{k \geq 0}$ weakly converges to an element $y \in \mathcal{H}$. Taking into account \eqref{eq4}, it follows from the asymptotic condition \eqref{asymptotic} that $y$ lies in $S$.
On the other hand, from the variational characterization of the projection we have
\begin{align*}
l \leq \lim\limits_{k \to + \infty} \langle - \bar{x}, x_{n_k} - \bar{x} \rangle = \langle - \bar{x}, y - \bar{x} \rangle \leq 0,
\end{align*}
which leads to a contradiction. This shows that \eqref{eq6} holds. Thus 
\begin{align*}
\limsup\limits_{n \to + \infty} (2 \beta_n \langle - \bar{x}, x_n - \bar{x} \rangle + (1 - \beta_n) \| \bar{x} \|^2) \leq 0. 
\end{align*}
A direct application of Lemma \ref{lemma 4} to \eqref{eq5}, by taking for $n \geq 0$ the choices $a_n := \| x_n - \bar{x} \|^2$, 
$$b_n := 2 \beta_n \langle - \bar{x}, x_n - \bar{x} \rangle + (1 - \beta_n) \| \bar{x} \|^2,$$ 
$\epsilon_n := 0$ and $\theta_n := 1 - \beta_n$, delivers the desired conclusion.
\end{proof}

\begin{remark} \upshape
(i) The asymptotic condition \eqref{asymptotic} was introduced in \cite{M} and used in the convergence analysis of inertial Krasnosel'ski\v{\i}-Mann algorithms designed for finding a common fixed point of a countably infinite family of nonexpansive operators.

(ii) In the particular case when $T_n = T$ for any $n \geq 0$, where $T : \mathcal{H} \to \mathcal{H}$ is a nonexpansive operator, the asymptotic condition \eqref{asymptotic} becomes the so-called \emph{demiclosedness principle}, which is known to hold for any nonexpansive operator (see \cite[Corollary~4.18]{BC}). 
In this setting, Theorem \ref{thm1} reduces to Theorem 3 in \cite{BoCsMe}.

(iii) The assumptions imposed on the sequence $(\lambda_n)_{n \geq 0}$ are met by every monotonically increasing or decreasing (and hence convergent) sequence with a positive limit. For the Tikhonov regularization sequence $(\beta_n)_{n \geq 0}$ one can choose, for instance, 
$\beta_0 \in (0, \frac{1}{2})$ and $\beta_n = 1 - \frac{1}{1+n}$ for any $n \geq 1$.
\end{remark}

The following result is a consequence of Theorem \ref{thm1} and addresses the problem of finding a common fixed point of a countably infinite family of averaged operators. It will play a determinant role in the convergence analysis of the forward-backward method with variable step sizes which we propose in the next section. 
We recall that a mapping $T : \mathcal{H} \to \mathcal{H}$ is called firmly nonexpansive, if $\|T x - T y \|^2 + \| (\operatorname{Id} - T) x - (\operatorname{Id} - T) y \|^2 \leq \|x - y\|^2$ for all $x,y \in \mathcal{H}$. Every firmly nonexpansive mapping is also nonexpansive. Let $\alpha \in (0,1)$ be fixed. 
We say that $R : \mathcal{H} \to \mathcal{H}$ is an $\alpha$-averaged operator, if there exists a nonexpansive operator $T : \mathcal{H} \to \mathcal{H}$ such that $R = (1-\alpha) \operatorname{Id} + \alpha T$. Here, $\operatorname{Id}$ denotes the identity operator on $\mathcal{H}$. 

\begin{corollary}\label{thm2}
        Let $(R_n)_{n \geq 0}$ be a sequence of $\alpha_n$-averaged operators $R_n : \mathcal{H} \to \mathcal{H}$ with $S := \bigcap_{n \geq 0} \operatorname{Fix} R_n \neq \emptyset$ and $(\alpha_n)_{n \geq 0} \subseteq (0,1)$ fulfilling the conditions 
\begin{align}\label{alpha} 
 \liminf_{n \to +\infty} \alpha_n > 0 \ \mbox{and} \ \sum_{n \geq 1} \left|\frac{1}{\alpha_n} - \frac{1}{\alpha_{n-1}} \right| < + \infty.
\end{align}
Consider the iterative scheme 
	\begin{align}\label{eq11}
	x_{n+1} = \beta_n x_n + \lambda_n (R_n (\beta_n x_n) - \beta_n x_n) \phantom{xx} \forall n \geq 0,
	\end{align}
	with starting point $x_0 \in \mathcal{H}$ and $(\lambda_n)_{n \geq 0}$, $(\beta_n)_{n \geq 0}$ real sequences satisfying the conditions:
	(i) $0 < \beta_n \leq 1$ for any $n \geq 0$, $\lim_{n \to +\infty} \beta_n = 1$, $\sum_{n \geq 0} (1 - \beta_n) = + \infty$ and $\sum_{n \geq 1} |\beta_n - \beta_{n-1}| < + \infty$; \\
	(ii) $0 < \lambda_n \leq \frac{1}{\alpha_n}$ for any $n \geq 0$, $\liminf_{n \to +\infty} \lambda_n > 0$ and $\sum_{n \geq 1} |\lambda_n - \lambda_{n-1}| < + \infty$; \\
	(iii) 
	\begin{align*}
	\sum_{n \geq 1} \|R_n(\beta_{n-1} x_{n-1}) - R_{n-1}(\beta_{n-1} x_{n-1})\| < + \infty.
	\end{align*} 
	Then
	\begin{align*}
	\|x_n - R_n x_n \| \to 0 \text{ as } n \to + \infty.
	\end{align*}
	In addition, if we suppose that the sequence $(R_n)_{n \geq 0}$ satisfies the asymptotic condition
	\begin{align}\label{asymptoticR}
	&\text{for any subsequences } (R_{n_k})_{k \geq 0} \text{ of } (R_n)_{n \geq 0} \text{ and } (x_{n_k})_{k \geq 0} \text{ of } (x_n)_{n \geq 0}, \nonumber \\ & \text{with } x_{n_k} \rightharpoonup x \in \mathcal{H} \text{ and } x_{n_k} - R_{n_k} x_{n_k} \to 0 \text{ as } k \rightarrow +\infty \text{ it holds } 
	x \in \bigcap_{n \geq 0} \operatorname{Fix} R_n, 
	\end{align} 
	then $(x_n)_{n \geq 0}$ converges strongly to $\operatorname{P}_S (0)$. 
\end{corollary}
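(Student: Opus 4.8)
The plan is to reduce the corollary to Theorem \ref{thm1} by decomposing each averaged operator. I would write $R_n = (1-\alpha_n)\operatorname{Id} + \alpha_n T_n$ with $T_n : \mathcal{H}\to\mathcal{H}$ nonexpansive (such a $T_n$ exists by the definition of an $\alpha_n$-averaged operator), and observe first that $\operatorname{Fix}R_n = \operatorname{Fix}T_n$, since $R_n x = x$ is equivalent to $\alpha_n(T_n x - x) = 0$; consequently $S = \bigcap_{n\geq 0}\operatorname{Fix}R_n = \bigcap_{n\geq 0}\operatorname{Fix}T_n$. Since $R_n(\beta_n x_n) - \beta_n x_n = \alpha_n(T_n(\beta_n x_n) - \beta_n x_n)$, the iteration \eqref{eq11} reads
\[ x_{n+1} = \beta_n x_n + \tilde\lambda_n(T_n(\beta_n x_n) - \beta_n x_n), \qquad \tilde\lambda_n := \lambda_n\alpha_n, \]
which is exactly the scheme \eqref{eq1} of Theorem \ref{thm1} driven by the nonexpansive family $(T_n)_{n\geq 0}$ with step sizes $(\tilde\lambda_n)_{n\geq 0}$. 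It then suffices to verify that $(\tilde\lambda_n)_{n\geq 0}$ and $(\beta_n)_{n\geq 0}$ meet the hypotheses of Theorem \ref{thm1}; assumption (i) on $(\beta_n)$ is identical in both statements.

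Next I would check that $(\tilde\lambda_n)_{n\geq 0}$ inherits assumption (ii) of Theorem \ref{thm1}. The bound $0 < \tilde\lambda_n \leq 1$ follows immediately from $0 < \lambda_n \leq 1/\alpha_n$, and $\liminf_{n\to+\infty}\tilde\lambda_n > 0$ from the positivity of the lower limits of $(\lambda_n)$ and $(\alpha_n)$. For the bounded variation I would first deduce from $\alpha_n \in (0,1)$ that $1/\alpha_n \geq 1$, whence
\[ |\alpha_n - \alpha_{n-1}| = \frac{|1/\alpha_{n-1} - 1/\alpha_n|}{(1/\alpha_n)(1/\alpha_{n-1})} \leq \left|\frac{1}{\alpha_n} - \frac{1}{\alpha_{n-1}}\right|, \]
so $(\alpha_n)$ is of bounded variation by \eqref{alpha}. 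Splitting $\tilde\lambda_n - \tilde\lambda_{n-1} = \alpha_n(\lambda_n - \lambda_{n-1}) + \lambda_{n-1}(\alpha_n - \alpha_{n-1})$ and using $\alpha_n \leq 1$ together with the boundedness of $(\lambda_n)$ (which holds since $\liminf\alpha_n > 0$ makes $1/\alpha_n$ bounded) then yields $\sum_{n\geq 1}|\tilde\lambda_n - \tilde\lambda_{n-1}| < +\infty$.

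The main work is in transferring assumption (iii). Using $T_n = \operatorname{Id} + \frac{1}{\alpha_n}(R_n - \operatorname{Id})$ I would write, for $z := \beta_{n-1}x_{n-1}$,
\[ T_n z - T_{n-1}z = \frac{1}{\alpha_n}(R_n z - R_{n-1}z) + \left(\frac{1}{\alpha_n} - \frac{1}{\alpha_{n-1}}\right)(R_{n-1}z - z). \]
Since averaged operators are nonexpansive, the boundedness argument of Theorem \ref{thm1} applies verbatim to \eqref{eq11}, so $(x_n)$ is bounded and, fixing $x^*\in S$, $\|R_{n-1}z - z\| \leq \|R_{n-1}z - x^*\| + \|x^* - z\| \leq 2\|z - x^*\|$ is bounded; moreover $1/\alpha_n$ is bounded because $\liminf\alpha_n > 0$. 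Taking norms and summing, the first term is controlled by assumption (iii) of the corollary and the second by \eqref{alpha}, giving $\sum_{n\geq 1}\|T_n z - T_{n-1}z\| < +\infty$. I expect this to be the principal obstacle, as it is precisely where the two summability hypotheses in \eqref{alpha} and in assumption (iii) combine.

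With all hypotheses of Theorem \ref{thm1} in force I would conclude $\|x_n - T_n x_n\|\to 0$; since $x_n - R_n x_n = \alpha_n(x_n - T_n x_n)$ and $\alpha_n \leq 1$, this immediately gives $\|x_n - R_n x_n\|\to 0$, the first assertion. For the strong-convergence part it remains to check that the asymptotic condition \eqref{asymptoticR} on $(R_n)$ implies \eqref{asymptotic} on $(T_n)$. This follows from the equivalent relations $x_{n_k} - R_{n_k}x_{n_k} = \alpha_{n_k}(x_{n_k} - T_{n_k}x_{n_k})$ and $x_{n_k} - T_{n_k}x_{n_k} = \frac{1}{\alpha_{n_k}}(x_{n_k} - R_{n_k}x_{n_k})$ together with the boundedness of $(\alpha_{n_k})$ and $(1/\alpha_{n_k})$: if $x_{n_k}\rightharpoonup x$ and $x_{n_k} - T_{n_k}x_{n_k}\to 0$, then $x_{n_k} - R_{n_k}x_{n_k}\to 0$, so \eqref{asymptoticR} yields $x\in\bigcap_{n\geq 0}\operatorname{Fix}R_n = S$, which is exactly \eqref{asymptotic}. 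Theorem \ref{thm1} then delivers $x_n\to\operatorname{P}_S(0)$, and since $S$ coincides for the two families this is the claimed limit.
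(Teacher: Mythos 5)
Your proposal is correct and takes essentially the same route as the paper's own proof: write $R_n = (1-\alpha_n)\operatorname{Id} + \alpha_n T_n$, recast \eqref{eq11} as the scheme \eqref{eq1} with effective step sizes $\lambda_n\alpha_n$, transfer hypothesis (iii) via $T_n = \operatorname{Id} + \tfrac{1}{\alpha_n}(R_n - \operatorname{Id})$ together with the boundedness of $(\beta_n x_n)_{n\geq 0}$ and $(R_n(\beta_n x_n))_{n \geq 0}$ and the condition \eqref{alpha}, transfer the asymptotic condition using $x_n - R_n x_n = \alpha_n (x_n - T_n x_n)$, and invoke Theorem \ref{thm1}. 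You are in fact slightly more careful than the paper on one point: you explicitly verify the bounded-variation condition $\sum_{n\geq 1} |\lambda_n\alpha_n - \lambda_{n-1}\alpha_{n-1}| < +\infty$ for the effective step sizes (via $|\alpha_n - \alpha_{n-1}| \leq \left|\tfrac{1}{\alpha_n} - \tfrac{1}{\alpha_{n-1}}\right|$), a requirement of Theorem \ref{thm1}(ii) that the paper's proof passes over in silence.
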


\begin{proof}
	Fix $n \geq 0$. Let $T_n : \mathcal{H} \to \mathcal{H}$ be the nonexpansive operator such that $R_n = (1 - \alpha_n) \operatorname{Id} + \alpha_n T_n$. The iterative scheme in \eqref{eq11} can be rewritten as 
	\[x_{n+1} = \beta_n x_n + \alpha_n \lambda_n (T_n(\beta_n x_n) - \beta_n x_n) \ \forall n \geq 0.\]
We have $T_n = \frac{1}{\alpha_n} R_n + \left(1 - \frac{1}{\alpha_n}\right) \operatorname{Id}$ and therefore 
	\begin{align*}
	& \|T_n (\beta_{n-1} x_{n-1}) - T_{n-1} (\beta_{n-1} x_{n-1}) \| \\ 
	\leq & \ \frac{1}{\alpha_n} \|R_n(\beta_{n-1} x_{n-1}) - R_{n-1} (\beta_{n-1} x_{n-1})\| + \left |\frac{1}{\alpha_n}- \frac{1}{\alpha_{n-1}} \right| \|R_{n-1} (\beta_{n-1} x_{n-1})\| \\ 
	& \ + \left |\frac{1}{\alpha_n}- \frac{1}{\alpha_{n-1}} \right| \|\beta_{n-1} x_{n-1}\|.
	\end{align*}
	From the proof of Theorem \ref{thm1} it follows that the sequences $(R_n(\beta_{n} x_{n}))_{n \geq 0}$ and $(\beta_n x_n)_{n \geq 0}$ are bounded. Combining this with \eqref{alpha} and assumption (iii), it follows that
	$$\sum_{n \geq 1} \|T_n(\beta_{n-1} x_{n-1}) - T_{n-1}(\beta_{n-1} x_{n-1})\| < + \infty.$$
Finally, since $\operatorname{Fix} T_n = \operatorname{Fix} R_n$, 
\begin{align*}
	\|x_{n} - T_{n} x_{n}\| = \frac{1}{\alpha_n} \|x_{n} - R_{n} x_{n}\|,
	\end{align*}
and $\liminf_{n \to +\infty} \alpha_n > 0$, it is easy to see that, if $(R_n)_{n \geq 0}$ satisfies the asymptotic condition \eqref{asymptoticR}, then $(T_n)_{n \geq 0}$ satisfies the asymptotic condition \eqref{asymptotic}. The conclusion follows by applying Theorem \ref{thm1}. 
\end{proof}

\section{A strongly convergent forward-backward algorithm with variable step sizes}\label{sec3}

Based on the general scheme proposed in Theorem \ref{thm1}, we will formulate in this section a strongly convergent forward-backward algorithm with variable step sizes for solving the monotone inclusion problem
\begin{align}\label{mon}
\mbox{find} \ x \in \mathcal{H} \ \mbox{such that} \ 0 \in Ax + Bx,
\end{align}
where $A : \mathcal{H} \rightrightarrows \mathcal{H}$ is a maximally monotone operator and $B : \mathcal{H} \to \mathcal{H}$ is a cocoercive operator.

Having a set-valued operator $A : \mathcal{H} \rightrightarrows \mathcal{H}$, we denote by $\operatorname{zer} A := \{x \in \mathcal{H} : 0 \in Ax \}$ its set of zeros, by $\operatorname{Gr}A := \{(x,u) \in \mathcal{H} \times \mathcal{H} : u \in Ax \}$ its graph and by $A^{-1} : \mathcal{H} \rightrightarrows \mathcal{H}$ the inverse operator of $A$, which is the operator having as graph $\operatorname{Gr} A^{-1} := \{(x,u) \in \mathcal{H} \times \mathcal{H} : x \in Au \}$. The operator $A$ is said to be monotone, if $\langle x - y, u - v \rangle \geq 0$ for all $(x,u),(y,v) \in \operatorname{Gr}A$, and maximally monotone, if it is monotone and there exists no proper monotone extension of the graph of $A$ on $\mathcal{H} \times \mathcal{H}$. The resolvent of $A$, $J_A : \mathcal{H} \rightarrow \mathcal{H}$, is defined by 
\[ J_A := (\operatorname{Id} + A)^{-1}.\]
If $A$ is maximally monotone, then $J_A : \mathcal{H} \rightarrow \mathcal{H}$ is single-valued, maximally monotone and firmly nonexpansive (see \cite[Proposition~23.7~and~Corollary~23.10]{BC}). Further, we denote by $R_A : \mathcal{H} \to \mathcal{H}, R_A := 2J_A - \operatorname{Id}$, the reflected resolvent of $A$.

We recall that, if $f : \mathcal{H} \to \mathbb{R} \cup \{+ \infty \}$ is a proper, convex and lower semicontinuous function, then the (convex) subdifferential $\partial f : \mathcal{H} \rightrightarrows \mathcal{H}$ of $f$, 
\[\partial f (x) := \{ p \in \mathcal{H} : f(y) - f(x) \geq \langle p, y - x \rangle \phantom{x} \forall y \in \mathcal{H}\}, \] for $x \in \mathcal{H}$ with $f(x) \neq + \infty$ and as $\partial f(x) = \emptyset$, otherwise, is a maximally monotone operator (see \cite{R}). Its resolvent is given by $J_{\partial f} = \operatorname{prox}_f$ (see \cite{BC}), where 
\[\operatorname{prox}_f : \mathcal{H} \to \mathcal{H}, \ \operatorname{prox}_f(x) = \operatorname{argmin}\limits_{y \in \mathcal{H}} \left\{ f(y) + \frac{1}{2} \|y-x\|^2 \right\},\] denotes the proximal operator of $f$.
For a nonempty closed and convex set $C \subseteq \mathcal{H}$, one has that $\operatorname{P}_C = \operatorname{prox}_{\delta_C}$, where \[
\delta_C(x)=\left\{\begin{array}{ll} 0, & x\in C \\
+ \infty, & x\not\in C \end{array}\right. ,
\]
denotes the indicator function of $C$.

A single-valued operator $B : \mathcal{H} \to \mathcal{H}$ is called $\beta$-cocoercive, for $\beta > 0$, if $\langle x-y, Bx-By \rangle \geq \beta \|Bx-By\|^2$ for all $x,y \in \mathcal{H}$. According to the Baillon-Haddad Theorem, if $g: \mathcal{H} \rightarrow \mathbb{R}$ is a convex and Fr\'echet differentiable function with $\frac{1}{\beta}$-Lipschitz gradient, then $\nabla g  : \mathcal{H} \to \mathcal{H}$ is a $\beta$-cocoercive operator.

The strongly convergent forward-backward algorithm with variable step sizes for solving \eqref{mon} that we propose in this section will be formulated as a particular case of \eqref{eq11} for $R_n = J_{\gamma_n A}(\operatorname{Id} - \gamma_n B)$, for $n \geq 0$, where $\operatorname{inf}_{n \geq 0} \gamma_n > 0$. To this end we will prove that $(R_n)_{n \geq 0}$ fulfills the asymptotic condition \eqref{asymptoticR} (see \cite[Corollary~17]{D}). The proof relies on the following lemma, which is a special instance of \cite[Corollary~25.5]{BC}.

\begin{lemma}\label{zer}
	Let $A, B : \mathcal{H} \rightrightarrows \mathcal{H}$ be maximally monotone operators, and $(x_n, u_n)_{n \geq 0} \in \operatorname{Gr} A$, $(y_n, v_n)_{n \geq 0} \in \operatorname{Gr} B$ such that $x_n \rightharpoonup x$, $y_n \rightharpoonup y$, $u_n \rightharpoonup u$, $v_n \rightharpoonup v$, $u_n + v_n \to 0$, $x_n - y_n \to 0$ as $n \to + \infty$. Then $x = y \in \operatorname{zer}(A+B)$, $(x, u) \in \operatorname{Gr}A$ and $(y, v) \in \operatorname{Gr}B$.
\end{lemma}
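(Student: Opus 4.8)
The plan is to work with the monotonicity inequalities of $A$ and $B$ simultaneously, using the two strong convergences $x_n - y_n \to 0$ and $u_n + v_n \to 0$ to tame the only pairings that are not weakly continuous. First I would record the trivial consequences of the hypotheses: since $x_n - y_n \to 0$ strongly while $x_n \rightharpoonup x$ and $y_n \rightharpoonup y$, the weak limits must agree, so $x = y$; likewise $u_n + v_n \to 0$ forces $u + v = 0$, i.e. $v = -u$.

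The decisive preliminary step is to show that the \emph{sum} of the two pairings converges to zero, namely $\langle x_n, u_n\rangle + \langle y_n, v_n\rangle \to 0$. This follows by rewriting $\langle x_n, u_n\rangle + \langle y_n, v_n\rangle = \langle x_n - y_n, u_n\rangle + \langle y_n, u_n + v_n\rangle$ and observing that each summand pairs a strongly null sequence with a bounded one (weakly convergent sequences are bounded). The value $0$ is consistent with the natural limit, since $\langle x, u\rangle + \langle y, v\rangle = \langle x, u + v\rangle = 0$.

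Next I would fix arbitrary $(p, q) \in \operatorname{Gr} A$ and $(p', q') \in \operatorname{Gr} B$ and add the monotonicity inequalities $\langle x_n - p, u_n - q\rangle \geq 0$ and $\langle y_n - p', v_n - q'\rangle \geq 0$. Every term of the sum is weakly continuous except the block $\langle x_n, u_n\rangle + \langle y_n, v_n\rangle$, which tends to $0$ by the previous step. Passing to the limit and substituting $y = x$ and $v = -u$, the terms regroup (using $\langle x, u+v\rangle = 0$) into the coupled inequality
\[
\langle x - p, u - q\rangle + \langle x - p', v - q'\rangle \geq 0 \quad \forall\, (p,q) \in \operatorname{Gr} A, \ (p', q') \in \operatorname{Gr} B.
\]

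Finally, to decouple this and obtain $(x, u) \in \operatorname{Gr} A$, I would invoke Minty's theorem: since $B$ is maximally monotone, $J_B = (\operatorname{Id}+B)^{-1}$ is everywhere defined, so I may pick $(p', q') \in \operatorname{Gr} B$ with $p' + q' = x + v$, i.e. $p' = J_B(x+v)$. For this choice $\langle x - p', v - q'\rangle = -\|x - p'\|^2 \leq 0$, so the coupled inequality yields $\langle x - p, u - q\rangle \geq \|x-p'\|^2 \geq 0$ for every $(p,q) \in \operatorname{Gr} A$; maximality of $A$ then gives $(x,u) \in \operatorname{Gr} A$. The symmetric choice $p = J_A(x+u)$ gives $(y,v) = (x,v) \in \operatorname{Gr} B$, whence $u + v = 0 \in Ax + Bx$, that is $x \in \operatorname{zer}(A+B)$. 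The main obstacle is exactly the weak$\times$weak setting, where the individual pairings $\langle x_n, u_n\rangle$ need not converge; the remedy is to couple the two inequalities so that only their convergent sum appears, and then to use the Minty resolvent trick to re-separate the two graph memberships once the limit has been taken.
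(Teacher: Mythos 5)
Your proof is correct, and every step checks out: the algebraic identity $\langle x_n, u_n\rangle + \langle y_n, v_n\rangle = \langle x_n - y_n, u_n\rangle + \langle y_n, u_n + v_n\rangle$ is exact; since weakly convergent sequences are bounded, both summands pair a strongly null sequence with a bounded one, so the sum of pairings tends to $0$; the passage to the limit in the sum of the two monotonicity inequalities is legitimate because every remaining term pairs a weakly convergent sequence against a fixed vector; and the decoupling via $p' = J_B(x+v)$ (available by Minty's theorem) combined with the maximality characterization of $\operatorname{Gr} A$ is valid. The comparison with the paper is, however, peculiar: the paper gives no proof of this lemma at all, but states it as a special instance of \cite[Corollary~25.5]{BC}. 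Your argument is thus a self-contained substitute for that citation, and it is essentially the classical coupling argument (of Brezis--Crandall--Pazy type) that underlies the cited result: couple the two monotonicity inequalities so that only the convergent sum $\langle x_n,u_n\rangle + \langle y_n,v_n\rangle$ appears, pass to the limit, then re-separate the two graph memberships. What your route buys is that it uses only tools the paper already invokes elsewhere (the definition of maximal monotonicity as absence of a proper monotone extension, the uniform boundedness principle, and the fact that resolvents of maximally monotone operators are everywhere defined), so the lemma becomes self-contained rather than imported. One small economy is possible at the end: once $(x,u) \in \operatorname{Gr} A$ is established, you may simply take $(p,q) = (x,u)$ in your coupled inequality to obtain $\langle x - p', v - q'\rangle \geq 0$ for all $(p',q') \in \operatorname{Gr} B$, which gives $(x,v) \in \operatorname{Gr} B$ without the second resolvent choice.
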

	
\begin{proposition}\label{prop1}
Let $A : \mathcal{H} \rightrightarrows \mathcal{H}$ be maximally monotone, $B : \mathcal{H} \to \mathcal{H}$ be $\beta$-cocoercive, for $\beta >0$, and $\operatorname{inf}_{n \geq 0} \gamma_n > 0$. Suppose that $\operatorname{zer}(A+B) \neq \emptyset$, and set 
$$R_n := J_{\gamma_n A}(\operatorname{Id} - \gamma_n B) \ \forall n \geq 0.$$ 
Then $(R_n)_{n \geq 0}$ fulfils the asymptotic condition \eqref{asymptoticR}.
\end{proposition}

\begin{proof}
By \cite[Proposition~25.1(iv)]{BC} we have $\operatorname{Fix} R_n = \operatorname{zer}(A+B)$, hence $\bigcap_{n \geq 0} \operatorname{Fix} R_n = \operatorname{zer}(A+B)$. 

Let $(x_n)_{n \geq 0}$ be the sequence generated by the iterative scheme \eqref{eq11}. Further, let $(x_{n_k})_{k \geq 0}$ be a subsequence of $(x_n)_{n \geq 0}$ such that $x_{n_k} \rightharpoonup x \in \mathcal{H}$ and $x_{n_k} - R_{n_k} x_{n_k} \to 0$ as $k \to + \infty$. We set $y_k := R_{n_k} x_{n_k}$ for any $k \geq 0$. It holds $x_{n_k} - y_k \to 0$, therefore $y_k \rightharpoonup x$ as $k \to + \infty$. Since 
\begin{align*}
x_{n_k} - y_k \in (\operatorname{Id} + \gamma_{n_k} A) y_k + \gamma_{n_k} Bx_{n_k} - y_k = \gamma_{n_k} (Ay_k + Bx_{n_k}) \ \forall k \geq 0, 
\end{align*}
it follows that for any $k \geq 0$ there exist $ (x_{n_k}, u_k) \in \operatorname{Gr} B, (y_k, v_k) \in \operatorname{Gr} A$ such that
\begin{align*}
\gamma_{n_k} (v_k + u_k) = x_{n_k} - y_k \to 0 \ \mbox{as} \ k \rightarrow +\infty. 
\end{align*}
Since $\operatorname{inf}_{n \geq 0} \gamma_n > 0$, we obtain $v_k + u_k \to 0$ as $k \to + \infty$.

On the other hand, since $B$ is $\beta$-cocoercive, we have $\|B x_{n_k}\| \leq \beta^{-1} \|x_{n_k}\| + \|B 0\|$ for any $k \geq 0$. Further, as $(x_{n_k})_{k \geq 0}$ is weakly convergent, the uniform boundedness principle implies that $(x_{n_k})_{k \geq 0}$ is bounded, hence $(Bx_{n_k})_{k \geq 0}$ is also bounded. Consequently, there exists a convergent subsequence $(Bx_{n_{k_l}})_{l \geq 0}$ of $(Bx_{n_k})_{k \geq 0}$ such that $Bx_{n_{k_l}} = u_{k_l} \rightharpoonup u$ as $l \to + \infty$. Since $v_k + u_k \to 0$ as $k \to + \infty$ it follows that $v_{k_l} \rightharpoonup - u$ as $l \to + \infty$. 

Finally, Lemma \ref{zer} applied to the sequences $(x_{n_{k_l}})_{l \geq 0}$, $(y_{k_l})_{l \geq 0}$, $(u_{k_l})_{l \geq 0}$ and $(v_{k_l})_{l \geq 0}$ gives $x = y \in \operatorname{zer}(A+B)$. 
\end{proof}

The following lemma will be useful in the proof of the main result of this section.

\begin{lemma}\label{FB}
	Let $A : \mathcal{H} \rightrightarrows \mathcal{H}$ be a maximally monotone operator and $(\gamma_n)_{n \geq 0} \subseteq (0,+\infty)$. Then for any $x \in \mathcal{H}$ and any $n, m \geq 0$ we have 
	\begin{align*}
	\left\| J_{\gamma_n A} (x - \gamma_n Bx)  - J_{\gamma_{m} A} (x - \gamma_{m} Bx) \right\| \leq \left| 1 - \frac{\gamma_{m}}{\gamma_n} \right| \left\| J_{\gamma_n A} (x - \gamma_n Bx) - x \right\|.
	\end{align*}
\end{lemma}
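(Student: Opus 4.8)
The plan is to reduce everything to the classical resolvent identity for maximally monotone operators, namely that for any $\gamma, \mu > 0$ and any $z \in \mathcal{H}$ one has
\[J_{\gamma A}(z) = J_{\mu A}\left(\frac{\mu}{\gamma} z + \left(1 - \frac{\mu}{\gamma}\right) J_{\gamma A}(z)\right).\]
I would first establish this identity directly from the definition of the resolvent: writing $p := J_{\gamma A}(z)$ gives $\frac{z - p}{\gamma} \in Ap$, and one checks that the point $w := \frac{\mu}{\gamma} z + (1 - \frac{\mu}{\gamma}) p$ satisfies $\frac{w - p}{\mu} = \frac{z - p}{\gamma} \in Ap$, so that $p = J_{\mu A}(w)$. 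This is the only place where maximal monotonicity of $A$ is used, guaranteeing that $J_{\gamma A}$ and $J_{\mu A}$ are single-valued everywhere.

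Next I would fix $x \in \mathcal{H}$ and $n, m \geq 0$, abbreviate $p := J_{\gamma_n A}(x - \gamma_n Bx)$, and apply the identity with $\gamma = \gamma_n$, $\mu = \gamma_m$ and $z = x - \gamma_n Bx$ to write
\[J_{\gamma_n A}(x - \gamma_n Bx) = J_{\gamma_m A}\left(\frac{\gamma_m}{\gamma_n}(x - \gamma_n Bx) + \left(1 - \frac{\gamma_m}{\gamma_n}\right) p\right).\]
Since $J_{\gamma_m A}$ is firmly nonexpansive, hence nonexpansive, I would then bound the left-hand side of the asserted inequality by the distance between the two arguments of $J_{\gamma_m A}$, that is, by
\[\left\|\frac{\gamma_m}{\gamma_n}(x - \gamma_n Bx) + \left(1 - \frac{\gamma_m}{\gamma_n}\right) p - (x - \gamma_m Bx)\right\|.\]

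The final step is a short algebraic simplification of this argument. The key observation is that $\frac{\gamma_m}{\gamma_n} \cdot \gamma_n = \gamma_m$, so the two terms involving $Bx$ cancel exactly, and the remaining expression collapses to $\left(1 - \frac{\gamma_m}{\gamma_n}\right)(p - x)$. Taking norms then yields the desired bound $\left|1 - \frac{\gamma_m}{\gamma_n}\right| \|p - x\|$. I expect no real obstacle here: the only points requiring care are the correct verification of the resolvent identity and the bookkeeping in the cancellation, both of which are elementary. Note that the cocoercivity of $B$ plays no role in this lemma --- the operator $B$ enters only through the shifted point $x - \gamma_n Bx$, so its precise structure is irrelevant, and the same argument would go through with $Bx$ replaced by any fixed vector.
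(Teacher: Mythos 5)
Your proposal is correct and follows essentially the same route as the paper: the resolvent identity $J_{\gamma A}(z) = J_{\mu A}\bigl(\tfrac{\mu}{\gamma} z + (1 - \tfrac{\mu}{\gamma}) J_{\gamma A}(z)\bigr)$, followed by nonexpansiveness of $J_{\gamma_m A}$ and the cancellation of the $Bx$ terms. The only difference is that the paper cites this identity from \cite[Proposition~23.28(i)]{BC}, whereas you derive it directly from the definition of the resolvent --- a valid and self-contained addition; your closing remark that the structure of $B$ is irrelevant is also accurate.
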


\begin{proof}
Let be $x \in \mathcal{H}$ and $n, m \geq 0$. According to \cite[Proposition 23.28(i)]{BC}, we have  that 
\[J_{\gamma_n A}x = J_{\gamma_{m} A}\left( \frac{\gamma_{m}}{\gamma_n} x + \left(1-\frac{\gamma_{m}}{\gamma_n}\right) J_{\gamma_n A}x \right). \] 
Further, using the nonexpansivity of the resolvent $J_{\gamma_{m}A}$, we get
	\begin{align*}
	&\left\| J_{\gamma_n A} (x - \gamma_n Bx)  - J_{\gamma_{m} A} (x - \gamma_{m} Bx) \right\| \\
	\leq& \left\| \frac{\gamma_{m}}{\gamma_n} (x - \gamma_n Bx) + \left(1-\frac{\gamma_{m}}{\gamma_n}\right) J_{\gamma_n A}(x - \gamma_n Bx) - (x - \gamma_{m}Bx) \right\| \\
	=& \left\| \frac{\gamma_{m}}{\gamma_n}x + \left(1-\frac{\gamma_{m}}{\gamma_n}\right) J_{\gamma_n A}(x - \gamma_n Bx) - x \right\| \\
	=& \left| 1 - \frac{\gamma_{m}}{\gamma_n} \right| \left\| J_{\gamma_n A} (x - \gamma_n Bx) - x \right\|. \qedhere
	\end{align*}
\end{proof}

\begin{theorem}\label{thm3}
	Let $A : \mathcal{H} \rightrightarrows \mathcal{H}$ be a maximally monotone operator and \mbox{$B : \mathcal{H} \rightarrow \mathcal{H}$} a $\beta$-cocoercive operator, for $\beta > 0$, such that $\operatorname{zer}(A+B) \neq \emptyset$. Consider the iterative scheme 
	\begin{align}\label{eq111}
	x_{n+1} = (1 - \lambda_n) \beta_n x_n + \lambda_n J_{\gamma_n A} (\beta_n x_n - \gamma_n B(\beta_n x_n)) \phantom{xx} \forall n \geq 0,
	\end{align}
	with starting point $x_0 \in \mathcal{H}$ and $(\lambda_n)_{n \geq 0}$, $(\beta_n)_{n \geq 0}$ and $(\gamma_n)_{n \geq 0}$ real sequences satisfying the conditions:\\[3pt]
(i) $0 < \beta_n \leq 1$ for any $n \geq 0$, $\lim_{n \to +\infty} \beta_n = 1$, $\sum_{n \geq 0} (1 - \beta_n) = + \infty$ and $\sum_{n \geq 1} |\beta_n - \beta_{n-1}| < + \infty$. \\
	(ii) $0 < \lambda_n \leq \frac{4\beta - \gamma_n}{2\beta}$ for any $n \geq 0$, $\liminf_{n \to +\infty} \lambda_n > 0$ and $\sum_{n \geq 1} |\lambda_n - \lambda_{n-1}| < + \infty$. \\
	(iii) $0 < \gamma_n < 2\beta$ for any $n \geq 0$, $\liminf_{n \to +\infty} \gamma_n > 0$, and $\sum_{n \geq 1} \left|\gamma_n - \gamma_{n-1} \right| < + \infty$. \\[5pt]
	Then $(x_n)_{n \geq 0}$ converges strongly to $\operatorname{P}_{\operatorname{zer}(A+B)}(0)$.
\end{theorem}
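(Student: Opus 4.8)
The plan is to recognize the iterative scheme \eqref{eq111} as a concrete instance of \eqref{eq11} in Corollary \ref{thm2}, applied to the forward-backward operators $R_n := J_{\gamma_n A}(\operatorname{Id} - \gamma_n B)$, and then to verify each hypothesis of that corollary. The first and central step is to identify the averagedness parameter of $R_n$. Since $J_{\gamma_n A}$ is firmly nonexpansive, hence $\tfrac{1}{2}$-averaged, and since $B$ being $\beta$-cocoercive makes $\operatorname{Id} - \gamma_n B$ a $\tfrac{\gamma_n}{2\beta}$-averaged operator whenever $\gamma_n \in (0,2\beta)$, the standard composition rule for averaged operators yields that $R_n$ is $\alpha_n$-averaged with
\[ \alpha_n = \frac{\tfrac{1}{2} + \tfrac{\gamma_n}{2\beta} - 2 \cdot \tfrac{1}{2} \cdot \tfrac{\gamma_n}{2\beta}}{1 - \tfrac{1}{2} \cdot \tfrac{\gamma_n}{2\beta}} = \frac{2\beta}{4\beta - \gamma_n}. \]
Because $\gamma_n \in (0,2\beta)$, this gives $\alpha_n \in (\tfrac{1}{2},1)$, so in particular $\liminf_{n} \alpha_n \geq \tfrac{1}{2} > 0$.

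Next I would check the remaining part of condition \eqref{alpha}. Since $\tfrac{1}{\alpha_n} = \tfrac{4\beta - \gamma_n}{2\beta} = 2 - \tfrac{\gamma_n}{2\beta}$, a direct computation gives
\[ \left| \frac{1}{\alpha_n} - \frac{1}{\alpha_{n-1}} \right| = \frac{1}{2\beta} \, |\gamma_n - \gamma_{n-1}|, \]
so assumption (iii) immediately delivers $\sum_{n \geq 1} |1/\alpha_n - 1/\alpha_{n-1}| < +\infty$. Moreover, the step-size bound in assumption (ii), namely $0 < \lambda_n \leq \tfrac{4\beta - \gamma_n}{2\beta} = \tfrac{1}{\alpha_n}$, is exactly condition (ii) of Corollary \ref{thm2}, while assumption (i) coincides with condition (i) there. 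Thus the only substantive verifications that remain are condition (iii) of the corollary and the asymptotic condition \eqref{asymptoticR}.

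For condition (iii) of Corollary \ref{thm2}, I would invoke Lemma \ref{FB} with $x = \beta_{n-1} x_{n-1}$ and the step sizes $\gamma_n$, $\gamma_{n-1}$, obtaining
\[ \| R_n(\beta_{n-1} x_{n-1}) - R_{n-1}(\beta_{n-1} x_{n-1}) \| \leq \frac{|\gamma_n - \gamma_{n-1}|}{\gamma_n} \, \| R_n(\beta_{n-1} x_{n-1}) - \beta_{n-1} x_{n-1} \|. \]
Since each $R_n$ is nonexpansive and the operators share the nonempty common fixed point set $\operatorname{zer}(A+B)$, the boundedness of $(\beta_n x_n)_{n \geq 0}$ — established exactly as in the proof of Theorem \ref{thm1} — propagates to boundedness of $(R_n(\beta_{n-1} x_{n-1}))_{n \geq 0}$, so the trailing norm is dominated by a constant. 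Combining this with $\inf_{n \geq 0} \gamma_n > 0$, which follows from $\liminf_n \gamma_n > 0$ together with the positivity of every $\gamma_n$, and with assumption (iii), I obtain the desired summability $\sum_{n \geq 1} \| R_n(\beta_{n-1} x_{n-1}) - R_{n-1}(\beta_{n-1} x_{n-1}) \| < +\infty$.

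Finally, the asymptotic condition \eqref{asymptoticR} for $(R_n)_{n \geq 0}$ is precisely the content of Proposition \ref{prop1}, which also identifies $S = \bigcap_{n \geq 0} \operatorname{Fix} R_n = \operatorname{zer}(A+B)$ via \cite[Proposition~25.1(iv)]{BC}. With all hypotheses of Corollary \ref{thm2} in place, its conclusion yields strong convergence of $(x_n)_{n \geq 0}$ to $\operatorname{P}_S(0) = \operatorname{P}_{\operatorname{zer}(A+B)}(0)$. I expect the main obstacle to be the correct determination of the averagedness parameter $\alpha_n$; once $\alpha_n = \tfrac{2\beta}{4\beta - \gamma_n}$ is established, the admissible step-size range $\lambda_n \leq 1/\alpha_n$ and the summability condition in \eqref{alpha} follow transparently, and the rest reduces to the routine applications of Lemma \ref{FB} and Proposition \ref{prop1} described above.
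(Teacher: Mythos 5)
Your proposal is correct and follows essentially the same route as the paper: rewrite \eqref{eq111} as an instance of \eqref{eq11} with $R_n = J_{\gamma_n A}(\operatorname{Id} - \gamma_n B)$, verify \eqref{alpha} and conditions (i)--(iii) of Corollary \ref{thm2} (using Lemma \ref{FB} plus boundedness for the summability condition), and invoke Proposition \ref{prop1} for \eqref{asymptoticR}. The only difference is that you explicitly derive the averagedness constant $\alpha_n = \frac{2\beta}{4\beta-\gamma_n}$ from the composition rule for averaged operators (correctly), whereas the paper simply sets this value without justification; this is a welcome addition, not a different approach.
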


\begin{proof}
	The iterative scheme in \eqref{eq111} can be rewritten as 
	\begin{align*}
	x_{n+1} = \beta_n x_n + \lambda_n (R_n (\beta_n x_n) - \beta_n x_n) \phantom{xx} \forall n \geq 0,
	\end{align*}
	for $R_n := J_{\gamma_n A} \circ (\operatorname{Id} - \gamma_n B)$. For any $n \geq 0$ we have that $\operatorname{Fix} R_n = \operatorname{zer}(A + B)$ (see \cite[Proposition~25.1]{BC}), thus 
$\bigcap_{n \geq 0} \operatorname{Fix} R_n = \operatorname{zer}(A + B) \neq \emptyset$.

We set $\alpha_n := \frac{2 \beta}{4 \beta - \gamma_n}$ for any $n \geq 0$. According to (iii) we have that $\liminf_{n \to + \infty} \alpha_n > 0$ and
	\begin{align*}
	\sum_{n \geq 1}\left|\frac{1}{\alpha_n} - \frac{1}{\alpha_{n-1}} \right| = \sum_{n \geq 1}  \left|\frac{4\beta - \gamma_n}{2\beta} - \frac{4\beta - \gamma_{n-1}}{2\beta} \right| = \frac{1}{2 \beta} \sum_{n \geq 1}  |\gamma_n - \gamma_{n-1}| < +\infty,
	\end{align*}
hence \eqref{alpha} holds.

Thanks to Lemma \ref{FB}, we have that for any $n \geq 1$
	\begin{align*}
	\|R_n(\beta_{n-1} x_{n-1}) - R_{n-1}(\beta_{n-1} x_{n-1})\| \leq \left| 1 - \frac{\gamma_{n-1}}{\gamma_n} \right| \left\| R_n(\beta_{n-1} x_{n-1}) - \beta_{n-1} x_{n-1} \right\|.
	\end{align*}
We have seen in the proof of Corollary \ref{thm2} that $(\beta_n x_n)_{n \geq 0}$ and $(T_n(\beta_{n-1} x_{n-1}))_{n \geq 1}$ are bounded, too. By (iii) it follows that $\sum_{n \geq 0} \left| 1 - \frac{\gamma_{n-1}}{\gamma_n} \right| < + \infty$
	and we can conclude that 
$$\sum_{n \geq 1} \|R_n(\beta_{n-1} x_{n-1}) - R_{n-1}(\beta_{n-1} x_{n-1})\| < + \infty,$$
which means that the conditions (i)-(iii) in Corollary \ref{thm2} are fulfilled.

By Proposition \ref{prop1} it follows that the sequence $(R_n)_{n \geq 0}$ fulfils condition \eqref{asymptoticR}. All assumptions in Corollary \ref{thm2} are fulfilled, which leads to the desired conclusion.
\end{proof}

\begin{remark} \label{rem2} \upshape
   (i) Let $f : \mathcal{H} \to (-\infty, +\infty]$ be a proper, convex and lower-semicontinuous function 
	and $g : \mathcal{H} \to \mathbb{R}$ a convex and Fr\'echet differentiable function with $\frac{1}{\beta}$-Lipschitz continuous gradient, for $\beta > 0$, such that $\operatorname{argmin}(f+g) \neq \emptyset$. The iterative scheme 
\begin{align}\label{itstep}
	x_{n+1} = (1 - \lambda_n) \beta_n x_n + \lambda_n \operatorname{prox}_{\gamma_n f} (\beta_n x_n - \gamma_n \nabla g(\beta_n x_n)) \phantom{xx} \forall n \geq 0,
	\end{align}
	with starting point $x_0 \in \mathcal{H}$ and $(\lambda_n)_{n \geq 0}$, $(\beta_n)_{n \geq 0}$ and $(\gamma_n)_{n \geq 0}$ real sequences satisfying the conditions (i)-(iii) in Theorem \ref{thm3} generates a sequence $(x_n)_{n \geq 0}$ which converges strongly to ${\operatorname P}_{\operatorname{argmin}(f+g) }(0)$.

(ii) Based on Theorem \ref{thm3}, one can derive strongly convergent primal-dual splitting algorithms with variable step sizes (\cite{AtBACo, BoCsHein, BoHe, BaCo, CoPe, V}) of forward-backward type. These methods are known for their high efficency when solving highly structured monotone inclusion problems involving mixtures of linearly composed maximally monotone operators and parallel sums of maximally monotone operators. The derivation of strongly convergent primal-dual splitting methods with variable step sizes of forward-backward type can be done in an analogous way as in \cite[Section~5.1]{BoCsMe}. 

   (iii) For $A, B : \mathcal{H} \rightrightarrows \mathcal{H}$ two maximally monotone operators, the classical Douglas-Rachford algorithm operates according to the iterative scheme 
   \begin{align*}
   (\forall n \geq 0)~\left\{\begin{array}{ll} y_n = J_{\gamma B} x_n \\
   z_n = J_{\gamma A}(2y_n - x_n) \\ 
   x_{n+1} = x_n + \lambda_n (z_n - y_n) \end{array}\right. ,
   \end{align*}
with starting point $x_0 \in \mathcal{H}$. Under mild conditions imposed on the sequence $(\lambda_n)_{n \geq 0}$ and under the assumption that $\operatorname{zer}(A+B) \neq \emptyset$, there exists an element $x \in \operatorname{Fix} R_{\gamma A} R_{\gamma B}$ such that the sequence $(x_n)_{n \geq 0}$ converges weakly to $x$ and $(y_n)_{n \geq 0}$, and $(z_n)_{n \geq 0}$ converge weakly to $J_{\gamma B}$ (see \cite[Theorem~25.6]{BC}). One can design a strongly convergent Douglas-Rachford algorithm with variable step sizes from the setting of Theorem \ref{thm1}, by considering $T_n : \mathcal{H} \to \mathcal{H}$, $T_n := R_{\gamma_n A} \circ R_{\gamma_n B}$ for $n \geq 0$. However, it is not clear if the family of operators $(T_n)_{n \geq 0}$ satisfies the asymptotical condition \eqref{asymptotic}. On the other hand, for a constant step size, i.e. $\gamma_n = \gamma$ for any $n \geq 0$ and some $\gamma \in (0,2\beta)$, a strongly convergent Douglas-Rachford method can be found in \cite{BoCsMe}.   
\end{remark}

\section{Numerical experiments}\label{sec4}

The numerical experiments presented in this section were implemented in Mathematica on a 4 $\times$ Intel\textregistered~ Core\texttrademark ~ i5-4670S CPU~@~3.10 GHz computer with 8GB of RAM.
It is worth to mention that the implementations were carried out by only using the symbolic computation packages of Mathematica. In other words, no discretization was performed. By doing so, we could fully exploit the spirit of the Tikhonov regularization technique which is employed in the algorithms we propose in this paper.

\subsection{A variational minimization problem}\label{subsec41}

For the first numerical experiment we considered an optimization problem which occurs in variational image reconstruction in infinite dimensional Hilbert spaces. For more details concerning this topic, we refer the reader to \cite{Ch} and the references therein. Let $\Omega \subset \mathbb{R}^n$, $n \geq 1$, be a domain, $\mathcal{H} := L^2(\Omega) := \left\{ u : \Omega \to \mathbb{R} : \int_\Omega |u(x)|^2 dx < + \infty \right\}$ be equipped with the scalar product $\langle u, v \rangle := \int_\Omega u (x) v(x) dx$ and the associated norm $\|u\| := \left( \int_\Omega |u(x)|^2 dx \right)^{1/2}$ for all $u, v \in L^2(\Omega)$. The optimization problem  we solved has the following formulation
\begin{align}\label{rec}
	\min\limits_{u \in \mathcal{H}} \left\{\frac{\lambda}{2} \int_{\Omega} ((Ku)(x) - b(x))^2 dx + \frac{1}{2} \int_{\Omega} F(x, u(x)) dx \right\}, 
\end{align}
where $\lambda > 0$, $b \in L^2(\Omega)$,
\begin{align*}
K : \mathcal{H} \to \mathcal{H}, \ (Ku)(x) := \int_\Omega k(x,y) u(y) dy, 
\end{align*} 
with $k \in L^2(\Omega \times \Omega)$, and  $F : \Omega \times \mathcal{H}  \to \mathbb{R}$ a measurable function. 
The parameter $\lambda$ controls the trade-off between a good fit of $u$ and a smoothness
requirement due to the regularization term, while the integral operator $K$ models the deblurring of the original image $u$. The linear operator $K$ is bounded with $\|K\| \leq \|k\|_{L^2(\Omega \times \Omega)}$ and its adjoint operator $K^\ast : \mathcal{H} \to \mathcal{H}$ is given by $(K^\ast u)(x) := \int_\Omega k(y,x) u(y) dy$ for any $ x \in \Omega$ (see, for instance, \cite[Chapter~1]{A}). \\[5pt]
For our numerical example, we considered $\Omega := (0,1)$, $F := u^2$ and 
\begin{align*}
k(x,y) := \left\{\begin{array}{ll} 1, & y \leq x \\
0, & y > x \end{array}\right. .
\end{align*}
For this choice, $K$ is the so-called \textit{Volterra operator} given by 
\begin{align*}
(Ku)(x) = \int_{0}^{x} u(y)~ dy.
\end{align*}
We have that
\begin{align*}
\|K\|^2 \leq \|k\|^2_{L^2((0,1) \times (0,1))} =\! \int_{0}^{1} \int_{0}^1 k(x,y)^2 dx~dy = \int_{0}^{1} \int_{y}^{1} 1~ dx~dy = \int_{0}^{1} (1 - y)~dy = \frac{1}{2},
\end{align*} 
while the adjoint operator of $K$ is given by 
\begin{align*}
K^\ast : \mathcal{H} \to \mathcal{H}, \ (K^\ast u)(x) = \int_{x}^1 u(y)~ dy. 
\end{align*}
In this setting, problem \eqref{rec} becomes 
\begin{align}\label{T}
	\min\limits_{u \in \mathcal{H}} \left\{\frac{\lambda}{2} \int_{\Omega} ((Ku)(x) - b(x))^2 dx + \frac{1}{2} \int_{(0,1)} u^2(x) dx,  \right\}, 
\end{align}
Since both operator summands are differentiable with Lipschitz continuous gradient, we had the options: (1) to use the iterative scheme \eqref{itstep} as a gradient method (i.e. $f$ is identical zero and $g$ is the function in the objective); (2) to divide the objective into two parts and evaluate one of the two smooth functions via its proximal operator, hence, end up with a proximal-gradient scheme. We pursued both approaches, by taking also into account \cite{CG}, which suggests that the evaluation of a smooth objective activated via its proximal operator may be advantageous in terms of computational performance compared to evaluating the whole objective through its gradient.
 
When choosing $f$ identical zero and $g(u)= \frac{\lambda}{2} \int_{\Omega} ((Ku)(x) - b(x))^2 dx + \frac{1}{2} \int_{(0,1)} u^2(x) dx$, the gradient of $g$ reads
\begin{align*}
\nabla g(u) = \lambda K^\ast (Ku - b) + u \ \forall u \in \mathcal{H}.
\end{align*}
For $u_1, u_2 \in \mathcal{H}$ it holds
\begin{align*}
\|\nabla g(u_1) - \nabla g(u_2)\| &= \|\lambda K^\ast K (u_1 - u_2) + (u_1 - u_2) \| \leq \left(\lambda \|K\|^2 + 1 \right) \|u_1 - u_2\| \\ &\leq \left(\frac{\lambda}{4} + 1 \right) \|u_1 - u_2\|, 
\end{align*}  
thus $\nabla g$ is $\left(\frac{\lambda}{4} + 1\right)$-Lipschitz continuous. \\
In this setting, the iterative scheme \eqref{itstep} reads 
\begin{align}\label{AlgRecGr}
u_{n+1} = \beta_n u_n - \gamma_n [\lambda K^\ast (K(\beta_n u_n) - b) + \beta_n u_n] \ \forall n \geq 0, 
\end{align} 
with starting point $u_0 \in L^2((0,1))$ and $(\beta_n)_{n \geq 0} \subseteq (0,1]$, $(\lambda_n)_{n \geq 0} \subseteq (0, 2 - \gamma_n (\lambda + 4)/8 ]$ and $(\gamma_n)_{n \geq 0} \subseteq (0, 8/(\lambda + 4))$ are real sequences fulfilling the assumptions (i), (ii) and (iii), respectively, in Theorem \ref{thm3}. 
In Table 2 we report some numerical results obtained when running the iterative scheme \eqref{AlgRecGr} for different starting points. 

When choosing $f(u) = \frac{1}{2} \int_{(0,1)} u^2(x) dx$ and $g(u) = \frac{\lambda}{2} \int_{\Omega} ((Ku)(x) - b(x))^2 dx$, the gradient of $g$ reads 
\begin{align*}
\nabla g(u) = \lambda K^\ast (Ku - b) \ \forall u \in \mathcal{H},
\end{align*}
while the proximal operator of $f$ is given by 
\begin{align*}
\operatorname{prox}_{\gamma f}(u) = \frac{u}{1+\gamma}, 
\end{align*}
where $\gamma >0$.
For $u_1, u_2 \in \mathcal{H}$ it holds
\begin{align*}
\|\nabla g(u_1) - \nabla g(u_2)\| = \|\lambda K^\ast K (u_1 - u_2) \| \leq \lambda \|K\|^2 \|u_1 - u_2\| \leq \frac{\lambda}{4} \|u_1 - u_2\|, 
\end{align*}  
hence $\nabla g$ is $\frac{\lambda}{4}$-Lipschitz continuous.  
In this setting, the iterative scheme \eqref{itstep} reads 
\begin{align}\label{AlgRec}
u_{n+1} = (1 - \lambda_n) \beta_n u_n + \frac{\lambda_n}{1+\gamma_n} [\beta_n u_n - \gamma_n \lambda K^\ast (Ku_n - f)(\beta_n u_n)] \phantom{xx} \forall n \geq 0, 
\end{align} 
with starting point $u_0 \in L^2((0,1))$ and $(\beta_n)_{n \geq 0} \subseteq (0,1]$, $(\lambda_n)_{n \geq 0} \subseteq (0, 2 - \gamma_n \lambda/8 ]$, and $(\gamma_n)_{n \geq 0} \subseteq (0, 8\lambda)$ are real sequences fulfilling the assumptions (i), (ii) and (iii), respectively, in Theorem \ref{thm3}. In Table 1 we report some numerical results obtained when running the iterative scheme \eqref{AlgRec} for different starting points. 

\begin{center}
	\begin{tabular}{lccccc}
		\hline \hline
		$u_0$ & $f$ & \multicolumn{2}{c}{$\gamma = 1.3$}  & \multicolumn{2}{c}{$\gamma_n = 1.3 - 0.1\cdot(-1)^n$}  \\
		\hline
		&  & Number of it. & CPU time in sec. & Number of it. & CPU time in sec.  \\ \hline
		$x^2/10$ & $x$                    & 13  & 4.09281 & - & $> 600$ \\
		$2^x/16$ & $x$                   & 13 &  248.686 & - & $> 600$ \\
		$\sin(x)$  & $x$             & 7 &  12.9818 & - & $> 600$ \\
		$\cos(x)$ & $x$          & 12 & 10.2537 & - & $> 600$ \\
		$x^2/10$ & $x^2$          & 13 & 24.5828 & - & $> 600$ \\
		$2^x/16$ & $x^2$                & 11 & 25.3676 & - & $> 600$ \\
		$\sin(x)$ & $x^2$                        & 9 & 18.5916 & - & $> 600$ \\
		$\cos(x)$ & $x^2$              & 14 & 89.7744 & - & $> 600$ \\
		$x^2/10$ & $\sin(x)$          & 13 & 16.3641 & - & $> 600$ \\
		$2^x/16$ & $\sin(x)$          & 12 & 30.4143 & - & $> 600$ \\
		$\sin(x)$ & $\sin(x)$          & 5 & 8.41192 & - & $> 600$ \\
		$\cos(x)$ & $\sin(x)$      & 14 & 17.4393 & - & $> 600$ \\
		\hline \hline 
	\end{tabular} 	
\end{center}
Table 1: Numerical performances of the Tikhonov regularized gradient method with constant and variable step sizes and different starting points $u_0$.
\begin{center}
	\begin{tabular}{lccccc}
		\hline \hline
		$u_0$ & $f$ & \multicolumn{2}{c}{$\gamma = 1.3$}  & \multicolumn{2}{c}{$\gamma_n = 1.3 - 0.1\cdot(-1)^n$}  \\
		\hline
		&  & Number of it. & CPU time in sec. & Number of it. & CPU time in sec.  \\ \hline
		$x^2/10$ & $x$            & 11  & 1.333 & 7 & 0.291091 \\
		$2^x/16$ & $x$            & 11 &  25.5198 & 7 & 13.7977 \\
		$\sin(x)$  & $x$          & 11 &  25.3024 & 7 & 12.8907 \\
		$\cos(x)$ & $x$           & 11 & 7.72126 & 7 & 2.83167 \\
		$x^2/10$ & $x^2$          & 10 & 12.9192 & 7 & 5.38062 \\
		$2^x/16$ & $x^2$          & 10 & 22.3199 & 7 & 13.649 \\
		$\sin(x)$ & $x^2$         & 10 & 21.8628 & 7 & 13.3971 \\
		$\cos(x)$ & $x^2$         & 10 & 20.5305 & 7 & 12.5219 \\
		$x^2/10$ & $\sin(x)$      & 11 & 9.79028 & 7 & 3.4632 \\
		$2^x/16$ & $\sin(x)$      & 11 & 27.1717 & 7 & 14.137 \\
		$\sin(x)$ & $\sin(x)$     & 11 & 27.5599 & 7 & 13.958 \\
		$\cos(x)$ & $\sin(x)$     & 11 & 6.65283 & 7 & 2.18238 \\
		\hline \hline 
	\end{tabular} 	
\end{center}
Table 2: Numerical performances of the Tikhonov regularized proximal-gradient method with constant and variable step sizes and different starting points $u_0$.

\textbf{Interpretation.} In the numerical experiments we carried out for both approaches described above we considered as regularization parameter $\lambda := 1$, as Tikhonov regularization sequence
$\beta_0 := \frac{1}{4}$, $\beta_n := 1 - \frac{1}{1 + n}$ for $n \geq 1$, and the constant sequence of relaxation parameters $\lambda_n = 0.9$ for any $n \geq 0$. As stopping criterion we used for both iterative schemes $\|u_{n+1} - u_n\| \leq 10^{-4}$.

Table 1 shows that the Tikhonov regularized gradient method with constant step sizes do not reach the demanded accuracy in a reasonable period of time (in our case 600 sec.), while the variant with variable step size does. Table 2 shows that the Tikhonov regularized proximal-gradient method with variable step sizes outperforms both the variant with constant step size as well as the Tikhonov regularized gradient method \eqref{AlgRecGr} from the point of view of the number of iterations and of the CPU time.

\subsection{A split feasibility problem}\label{subsec42}

Let $\mathcal{H}$ and $\mathcal{G}$ be real Hilbert spaces and $L : \mathcal{H} \to \mathcal{G}$ a bounded linear operator. Let $C$ and $Q$ be nonempty, cconvex and closed subsets of $\mathcal{H}$ and $\mathcal{G}$, respectively. The \textit{split feasibility problem} (SFP) searches for a point with the property 
\begin{align}\label{SFP}
x \in C \text{ and } Lx \in Q.
\end{align}
It was originally introduced by Censor and Elfving \cite{CeEl} for solving inverse problems in the context of phase retrieval, medical image reconstruction and intensity modulated radiation therapy. 

We will use the strongly convergent proximal-gradient algorithm stated in \eqref{itstep}  for solving the (SFP). For this purpose, we note that, provided it has a solution, the problem $\eqref{SFP}$ can be equivalently written as
\begin{align}\label{min1}
\min_{x \in \mathcal{H}} \left\{ \delta_C(x) +  \frac{1}{2} \| Lx - \operatorname{P}_{Q} (Lx) \|^2 \right\}.
\end{align} 
We choose in the framework of Remark \ref{rem2} (i) $f = \delta_C$ and $g(x) = \frac{1}{2} \| Lx - \operatorname{P}_{Q} (Lx) \|^2$. The function $g$ is Fr\'echet differentiable with gradient $\nabla g = L^\ast \circ (\operatorname{Id} - P_Q) \circ L$ and it holds for $x_1, x_2 \in \mathcal{H}$
\begin{align*}
\|\nabla g(x_1) - \nabla g(x_2)\| \leq \|L\|^2 \|x_1 - x_2\| 
\end{align*}
hence $\nabla g$ is Lipschitz continuous with constant $\|L\|^2$. The iterative scheme in \eqref{itstep} applied to problem \eqref{min1} reads

\begin{equation}\label{Alg}
(\forall n \geq 0)~\left\{ \begin{array}{rl}
v_n &= (L^\ast \circ (\operatorname{Id} - P_Q) \circ L)(\beta_n x_n) \\
x_{n+1} &= (1-\lambda_n) \beta_n x_n + \lambda_n P_C(\beta_n x_n - \gamma_n v_n).
\end{array}
\right.
\end{equation}

For the numerical experiments we considered  
$$\mathcal{H} = \mathcal{G} = L^2([0,2\pi]) := \left\{ f : [0,2\pi] \to \mathbb{R} : \int_{0}^{2\pi} |f(s)|^2 ds < + \infty \right\}$$ equipped with the scalar product $\langle f, g \rangle := \int_{0}^{2\pi} f(s)g(s) ds$ and the associated norm $\|f\| := \left( \int_{0}^{2\pi} |f(s)|^2 ds \right)^{1/2}$ for all $f,g \in L^2([0,2\pi])$, and the nonempty, convex and closed sets
\begin{align*}
C := \left\{ x \in L^2([0,2\pi]) : \int_{0}^{2\pi} x(s) ds \leq 1 \right\}, 
\end{align*}
and
\begin{align*}
Q := \mathbb{R}_+ v, \text{ with } v(t) := t^2.
\end{align*}
Further, we considered the bounded self-adjoint linear operator $L : L^2([0,2\pi]) \to L^2([0,2\pi])$ 
\begin{align*}
(Lx)(t) := \frac{3t}{8 \pi^{3}} \int_{0}^{2\pi} s \cdot x(s) ~ ds. 
\end{align*}
 For the norm of $L$ we have the following estimate
\begin{align*}
\|L\|^2 \leq \frac{9}{64 \pi^6} \int_{0}^{2 \pi} \int_{0}^{2 \pi} (s \cdot t)^2 ~ ds~dt = 1.
\end{align*}
The projection onto the set $C$ and can be calculated for any $x \in \mathcal{H}$ as (\cite[Example~28.16]{BC}) \[P_C(x) = \left\{\begin{array}{ll} \frac{1 - \int_{0}^{2\pi} x(s) ds}{2\pi} + x, & \mbox{if} \ \int_{0}^{2\pi} x(s) ds > 1, \\[3pt]
x, & \text{else} \end{array}\right. .\] 
On the other hand $P_Q$ is given for any $x \in \mathcal{H}$ by (\cite[Example~28.24]{BC})
\[(P_Q(x))(t) = \left\{\begin{array}{ll} \frac{5 \int_{0}^{2 \pi} s^2 \cdot x(s) ds }{32 \pi^5} \cdot t^2, & \mbox{if} \ \int_{0}^{2\pi} s^2 \cdot x(s) ds > 0 \\[3pt]
0, & \text{else} \end{array}\right. .\]
Note that, since $x = 0$ is a solution to the (SFP), we have $C \cap L^{-1}(Q) \neq \emptyset$.  

\begin{center}
	\begin{tabular}{lcccc}
		\hline \hline
		$x_0$   & \multicolumn{2}{c}{$\gamma_n = 0.5$}  & \multicolumn{2}{c}{$\gamma_n = 1 - \frac{0.5}{1+n}$}  \\
		\hline
		&  Number of it. & CPU time in sec. & Number of it. & CPU time in sec.  \\ \hline
		$t$                     & 8  & 0.292133 & 6 & 0.197763 \\
		$t^2$                     & 12 &  1.74567 & 8 & 0.3915235 \\
		$t^3$                & 17 &  842.058 & 10 & 0.655565 \\
		$\sin(t)$           & 3 & 0.473435 & 2 & 0.28365 \\
		$\cos(t)$            & 1 & 0.050345 & 1 & 0.050911 \\
		$\exp(t)$                  & 19 & 836.925 & 11 & 2.90818 \\
		$\log(t)$                          & 5 & 16.0949 & 4 & 12.8893 \\
		$\sqrt{t}$                & 6 & 2.72155 & 5 & 2.24045 \\
		\hline \hline 
	\end{tabular} 	
\end{center}
Table 3: Numerical performances of the iterative scheme \eqref{Alg} with constant and variable step sizes, different stating points and the constant sequence of relaxation parameters $\lambda_n = 0.4$ for any $n \geq 0$.

\begin{center}
	\begin{tabular}{lcccc}
		\hline \hline
		$x_0$   & \multicolumn{2}{c}{$\gamma_n = 0.5$}  & \multicolumn{2}{c}{$\gamma_n = 1 - \frac{0.5}{1+n}$}  \\
		\hline
		&  Number of it. & CPU time in sec. & Number of it. & CPU time in sec.  \\ \hline
		$t$                     & 4  & 0.123398 & 3 & 0.088829 \\
		$t^2$                     & 6 &  0.251096 & 4 & 0.158801 \\
		$t^3$                & 9 & 0.481185 & 5 & 0.204154 \\
		$\sin(t)$           & 4 & 0.561261 & 3 & 0.412813 \\
		$\cos(t)$            & 1 & 0.048766 & 1 & 0.048814 \\
		$\exp(t)$                  & 10 & 2.14205 & 6 & 1.11041 \\
		$\log(t)$                          & 3 & 10.0328 & 3 & 10.2823 \\
		$\sqrt{t}$                & 3 & 1.4994 & 3 & 1.49998 \\
		\hline \hline 
	\end{tabular} 	
\end{center}
Table 4: Numerical performances of the iterative scheme \eqref{Alg} with constant and variable step sizes, different stating points and the sequence of relaxation parameters  $\lambda_n = \frac{1}{2} + \frac{1}{2+n}$ for any $n \geq 0$.\\

\textbf{Interpretation.} We implemented the iterative scheme \eqref{Alg} in Mathematica using symbolic computation. We considered as Tikhonov regularization sequence $\beta_0 := \frac{1}{4}$, $\beta_n := 1 - \frac{1}{1 + n}$ for $n \geq 1$ and different choices for the sequences of relaxation variables and for the step sizes. As stopping criterion we used 
$$ \frac{1}{2} ||P_C(x_n) - x_n||^2 + \frac{1}{2} ||P_Q(Lx_n) - Lx_n||^2 \leq 10^{-3}.$$ 
In Table 3 and Table 4 we compare the numerical performances of \eqref{Alg} with constant and variable step sizes for different starting points and for two different settings of relaxation variables. One can notice that, in both settings, the variant of the iterative scheme \eqref{Alg} with variable step sizes outperforms the one with constant step sizes from the point of view of the number of iterations and of the CPU time.

\end{document}